\theoremstyle{plain}
\newtheorem{theorem}{Theorem}[section]
\newtheorem{lemma}[theorem]{Lemma}
\theoremstyle{definition}
\numberwithin{subcase}{case}
\numberwithin{subsubcase}{subcase}
\numberwithin{table}{section}
\numberwithin{equation}{section}
\newcommand{\singleedge}[5]{\begin{tikzpicture}%
[node distance=0.8cm,baseline={(left.base)},%
pre/.style={<-,shorten <=1pt,>=angle 45},%
post/.style={->,shorten >=1pt,>=angle 45}],%
rectangle/.style={inner sep=0pt,minimum size=4mm}];%
\node[rectangle] (left) {$\strut$#1};%
\node[rectangle] (right) [right=of left] {$\strut$#2}%
edge [#4,#5] node[yshift=5pt] {#3} (left);%
\end{tikzpicture}}
\newcommand{\solidedge}[3]{\singleedge{#1}{#2}{#3}{pre}{solid}}
\newcommand{\dashededge}[3]{\singleedge{#1}{#2}{#3}{pre}{dashed}}
\newcommand{\reversedsolidedge}[3]{\singleedge{#1}{#2}{#3}{post}{solid}}
\newcommand{\reverseddashededge}[3]{\singleedge{#1}{#2}{#3}{post}{dashed}}
\newcommand{\Gammaarrow}{\Gamma_{\!\to}}
\newcommand{\infinity}{\infty}
\newcommand{\set}[1]{\left\{#1\right\}}
\newcommand{\setof}[2]{\left\{{#1}\mid{#2}\right\}}
\newcommand{\abs}[1]{\left\vert{#1}\right\vert}
\newcommand{\rationals}{\mathbb Q}%
\newcommand{\complexes}{\mathbb C}
\newcommand{\closedray}[1]{{\left[{#1},{\infinity}\right)}}
\newcommand{\charprod}[3]{\left<{#1},\strut{#2}\right>_{#3}}
\DeclareMathOperator{\ind}{ind}
\DeclareMathOperator{\sgn}{sgn}
\DeclareMathOperator{\vertices}{{\mathscr V}}
\DeclareMathOperator{\edges}{{\mathscr E}}
\DeclareMathOperator{\In}{In}
\begin{document}
\title{An extension of a theorem and errata for 
``A Class of Representations of Hecke Algebras''}
\author{Dean Alvis}

\date{}

\maketitle

\begin{abstract}
By Theorem~1.12 of the paper ``A Class of Representations of Hecke Algebras'',
if $W$ is a Coxeter group
whose proper parabolic 
subgroups are finite, and if the module of
a finite $W$-digraph $\Gamma$ is isomorphic to the module
of a $W$-graph, then $\Gamma$ must be acyclic.  
Here we extend this result to 
 Coxeter groups with finite dihedral 
 parabolic subgroups and 
$W$-graphs with arbitrary scalar edge labels. 
Also, errata for the paper 
are listed in the last section. 
\end{abstract}


\section{An extension of Theorem~1.12 of  \cite{digraphpaper}}

Let $(W,S)$ be a Coxeter system with presentation
\[
W =
\left<
s \in S
\, \mid \,
(rs)^{n(r,s)}=e \text{ for $r,s\in S$ whenever $n(r,s)<\infinity$}
\right> ,
\]
where $n(s,s) = 1$ and $1<n(r,s)=n(s,r)\le\infinity$ for $r,s\in S$, $r \ne s$.
Let $\ell$ be the length function of $(W,S)$.
Let $u$ be an indeterminate over $\complexes$, 
and let $H$ be the Hecke algebra
of $(W,S)$ over $\rationals(u)$.  

See
\cite{digraphpaper} for the definition of the notion of {\it $W$-digraph}.
For $\Gamma$ a $W$-digraph and $\beta \in \vertices(\Gamma)$, 
define $\In(\beta)$ to be the set of all $s \in S$ such that
$\Gamma$ has an edge of the form
\solidedge{$\alpha$}{$\beta$}{$s$}
or
\dashededge{$\alpha$}{$\beta$}{$s$} 
for some $\alpha \in \vertices(\Gamma)$. 
Observe $\beta$ is a source (sink) in $\Gamma$ if and only if 
$\In(\beta) = \emptyset$ ($\In(\beta) = S$, respectively). 
For $J \subseteq S$, put
\[
N_{\Gamma}(J) =
\abs{ \setof{\beta \in \vertices(\Gamma)} {\In(\beta) = J} }.
\]
(This definition will only be applied when $\Gamma$ is finite, i.e. 
when $\vertices(\Gamma)$ and $\edges(\Gamma)$ are finite.)

Now, let $\Psi$ be a $W$-graph over the subfield $F$ of $\complexes$, 
in the sense of \cite{gyojawgraph}, Definition~2.1, with 
vertex-labeling function $x \mapsto I_x \subseteq S$, $x \in X$, 
edge-labeling function $\mu : X \times X \rightarrow F$, and with 
the indeterminate $u$ here playing the role of
 $q^{1/2}$ in \cite{gyojawgraph}.
For  $J \subseteq S$, put
\[
N_{\Psi}(J) =
\abs{ \setof{x \in \vertices(\Psi)} { I_x = J} }.
\]


The goal of this section is to prove the following.

\begin{theorem}
If $n(s,t) < \infinity$ for $s, t \in S$, 
$\Gamma$ is a finite $W$-digraph, $\Psi$ is a $W$-graph
over a subfield $F$ of $\complexes$, 
and $M(\Gamma)^F = F(u) \otimes_{\rationals(u)} M(\Gamma)$ is 
isomorphic to $M(\Psi)$ as $H^F$-modules, then
the following hold:
\begin{enumerate}[{\upshape(i)}]
\item
$N_{\Gamma}(J) = N_{\Psi}(J)$ for all $J \subseteq S$.
\item
$\Gamma$ is acyclic.
\end{enumerate}
\label{theorem:wgraphacyclic}
\end{theorem}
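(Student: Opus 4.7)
The plan is to reduce both parts of the theorem to character-theoretic computations in $H^F$. For each $J \subseteq S$ I would construct an element $z_J \in H^F$ whose trace on a module of the form $M(\Psi)$ ($\Psi$ a $W$-graph) equals $N_{\Psi}(J)$, and whose trace on $M(\Gamma)$ ($\Gamma$ a finite $W$-digraph) equals $N_{\Gamma}(J)$ plus an explicit correction built out of closed directed walks in $\Gamma$. The hypothesized isomorphism then equates the two traces for every $J$, giving a family of identities from which both (i) and (ii) should follow.

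First I would construct $z_J$. The hypothesis $n(s,t)<\infinity$ for all $s,t\in S$ forces every two-generator parabolic subgroup to be finite dihedral, so each corresponding parabolic Hecke subalgebra is finite-dimensional. In each single-generator subalgebra one has the orthogonal idempotents
\[
e_s^{+} = \frac{T_s + 1}{u^2 + 1}, \qquad e_s^{-} = \frac{u^2 - T_s}{u^2 + 1},
\]
projecting onto the $T_s$-eigenspaces for eigenvalues $u^2$ and $-1$. I would build $z_J$ as a symmetrized product of such idempotents (one factor for each $s \in S$, of sign $+$ if $s \in J$ and $-$ otherwise), taking care of non-commutativity by averaging over orderings. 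Closed-form expressions for the pairwise products $e_s^{\epsilon} e_t^{\epsilon'}$ inside each two-generator subalgebra are available because that subalgebra is finite-dimensional; this is the step that genuinely exploits the new hypothesis, replacing the stronger finiteness of proper parabolic subgroups used in the original Theorem~1.12 with finiteness of only the two-generator ones.

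Next I would compute both traces. On $M(\Psi)$, the $W$-graph relations (in the sense of \cite{gyojawgraph}) guarantee that each $T_s$ acts on the basis vector $x$ with eigenvalue $u^2$ or $-1$ according to whether $s \notin I_x$ or $s \in I_x$, the off-diagonal terms being supplied by $\mu$-labels. This lets me identify $z_J$ as the projector onto $\linspan\setof{x \in \vertices(\Psi)}{I_x = J}$, whence $\tr(z_J, M(\Psi)) = N_{\Psi}(J)$. Acting on $M(\Gamma)$ the directed edges produce off-diagonal contributions to each $T_s$, so expanding $z_J$ in the basis $\vertices(\Gamma)$ yields the diagonal term $N_{\Gamma}(J)$ plus a sum over closed directed walks in $\Gamma$, weighted by rational functions in $u$ determined by the walk labels and by the coefficients of $z_J$.

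The principal obstacle is showing that these identities force $N_{\Gamma}(J) = N_{\Psi}(J)$ and simultaneously rule out every directed cycle in $\Gamma$. I would attack this by filtering closed walks by length (equivalently, by $u$-degree of their trace contributions) and isolating the leading term contributed by a hypothetical shortest directed cycle $c$ in $\Gamma$. The expectation is that for a well-chosen $J$ (for instance, one adapted to the $\In$-sets of vertices of $c$), the leading term cannot be cancelled either by other closed walks of greater length or by the integer $N_{\Gamma}(J)$, so a nonzero $u$-dependent tail would appear in $\tr(z_J, M(\Gamma))$. Since $\tr(z_J, M(\Psi)) = N_{\Psi}(J) \in \integers$ is a constant, this contradicts the isomorphism. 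Hence no directed cycle can exist, proving (ii); and once $\Gamma$ is acyclic the cycle correction vanishes identically and the two traces compare to give (i).
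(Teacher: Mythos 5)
Your reduction to trace identities breaks down at both of its computational claims, so the difficulty is not only in the final cancellation step. On the digraph side, the vertices of $\Gamma$ are not approximate eigenvectors of $T_s$ in the way your expansion assumes: for a solid $s$-edge from $\alpha$ to $\beta$, the matrix of $T_s$ on $\linspan\set{\alpha,\beta}$ has diagonal entries $0$ and $u^2-1$ (not $u^2$ and $-1$), so the diagonal entries of $e_s^{\pm}$ in the vertex basis are the proper fractions $1/(u^2+1)$ and $u^2/(u^2+1)$ rather than $0$ or $1$. Hence the ``diagonal part'' of $z_J$ does not sum to $N_\Gamma(J)$, and the decomposition of $\tr(z_J,M(\Gamma)^F)$ into $N_\Gamma(J)$ plus closed-walk corrections is already false for acyclic $\Gamma$; the set $\In(\beta)$ records edge \emph{orientations}, which single-generator spectral data cannot see. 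On the $W$-graph side, the $e_s^{\pm}$ for distinct $s$ do not commute, a symmetrized product of them is not an idempotent, and $\linspan\setof{x}{I_x=J}$ is not $H^F$-invariant; moreover the off-diagonal $\mu$-terms produce exactly the same kind of closed-walk contributions to $\tr(z_J,M(\Psi))$ that you attribute only to $\Gamma$ (already $\tr(e_s^{+}e_t^{+})$ picks up terms $\mu(y,z)\mu(z,y)$ over pairs with $s\in I_y$, $t\notin I_y$, $t\in I_z$, $s\notin I_z$). So the asymmetry your argument needs --- an integer on one side, a $u$-dependent tail on the other --- is not established, and the decisive step for (ii), that the leading term of a shortest directed cycle cannot cancel, is stated only as an expectation.

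The paper's proof is organized quite differently and relies on structural input about $W$-digraphs that your scheme never invokes. For (i) it computes, for each $J\subseteq S$, the multiplicity $\charprod{M}{\sgn^F}{H_J^F}$ over the parabolic subalgebra: on the $W$-graph side this equals $\abs{\setof{x}{J\subseteq I_x}}$ by a direct eigenvector computation, while on the digraph side it equals the number of acyclic components of $\Gamma_J$, hence the number of vertices with $J\subseteq\In(\beta)$ --- this is where $n(s,t)<\infinity$ enters, via Lemma~\ref{lemma:linearcharmults}(ii) --- and induction on $\abs{S\setminus J}$ inverts the resulting summation identities. For (ii) it compares $\charprod{M(\Gamma)^F}{\ind^F}{H^F}$, the number of connected components, with the bound $N_\Psi(\emptyset)=N_\Gamma(\emptyset)$ from Lemma~\ref{lemma:indlemma}, concluding that every component contains a source and is therefore acyclic by the uniqueness-of-source/sink results of \cite{digraphpaper}, Theorem~1.5. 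Note also that $H^F$ need not be finite dimensional, so the $\ind^F$- and $\sgn^F$-multiplicities used here are eigenspace dimensions, not traces of idempotents of $H^F$; any repair of your approach would have to replace the elements $z_J$ by restriction to finite parabolic data and, more importantly, import some structural fact about $W$-digraphs that lets a module invariant detect orientation.
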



We require the following results.
For $F$ a subfield of $\complexes$, $\lambda$ a linear character
of $H^F$, and $V$ an $H^F$-module, define 
\[
V_\lambda =
\setof{v \in V}{h v = \lambda(h) v \text{ for all } h \in H^F}
\]
and 
\[
\charprod{V}{\lambda}{H^F}
=
\dim_{F(u)} V_{\lambda}.
\]
Also, let $\ind^F$ and $\sgn^F$ be the linear characters of $H^F$ determined
by $\ind^F(T_w) = u_w = u^{2 \ell(w)}$ and
$\sgn^F(T_w) = \varepsilon_w = (-1)^{\ell(w)}$ for $w \in W$.


\begin{lemma}
\label{lemma:eigenvalues}
Let $F$ a subfield of $\complexes$. Let $\Gamma$ be
a $W$-digraph, and put $V = M(\Gamma)^F$.
Suppose $s \in S$ and
 $v = \sum_{\gamma\in X} \lambda_\gamma \gamma \in V$.
Then the following hold:
\begin{enumerate}[{\upshape(i)}]
\item
$T_s v = u^2 v$ if and only if
$\lambda_\beta = \lambda_\alpha$ whenever 
\solidedge{$\alpha$}{$\beta$}{$s$}
or
\dashededge{$\alpha$}{$\beta$}{$s$}
is an edge of $\Gamma$.
\item
$T_s v = - v$ if and only if
\[
\lambda_\beta
=
\begin{cases}
- u^{-2} \lambda_\alpha & 
\text{whenever $\solidedge{$\alpha$}{$\beta$}{$s$} \in \edges(\Gamma)$,} \\
- (u+1)(u^2-u)^{-1} \lambda_\alpha &
\text{whenever $\dashededge{$\alpha$}{$\beta$}{$s$} \in \edges(\Gamma)$.}
\end{cases}
\]
\end{enumerate}
\end{lemma}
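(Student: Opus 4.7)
Plan. The proof is a direct computation based on the explicit description of the $T_s$-action on $M(\Gamma)^F$ given by the definition of a $W$-digraph module in \cite{digraphpaper}. According to that definition, for each basis vertex $\gamma$ the element $T_s \gamma$ is a specific $F(u)$-linear combination whose diagonal coefficient (that of $\gamma$ itself) is $u^2$ when $s \notin \In(\gamma)$ and $-1$ when $s \in \In(\gamma)$, while each $s$-labeled edge incident to $\gamma$ contributes a further term involving the vertex at the other end of that edge, with distinct coefficients assigned to solid and to dashed edges so as to enforce the Hecke quadratic relation $(T_s - u^2)(T_s + 1) = 0$.

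I would proceed by writing $T_s v = \sum_\gamma \lambda_\gamma \, T_s \gamma$ and collecting the coefficient $c_\beta$ of each basis vertex $\beta$ in the resulting expression. This coefficient is an $F(u)$-linear combination of $\lambda_\beta$ together with the $\lambda_\alpha$ for those $\alpha$ that are related to $\beta$ via an $s$-labeled edge. For part (i), I would then impose $c_\beta = u^2 \lambda_\beta$ for every $\beta$ and split into the two cases determined by whether $s \in \In(\beta)$; in one case the identity is tautological (no constraint on the $\lambda$'s), while in the other case it reduces, edge by edge, to the per-edge relations $\lambda_\beta = \lambda_\alpha$ asserted in the lemma, with solid and dashed edges contributing the same relation in this $u^2$-eigenvalue setting.

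Part (ii) is handled analogously by imposing $c_\beta = -\lambda_\beta$. A parallel case analysis on whether $s \in \In(\beta)$ again collapses one case to a tautology, while the other case yields the stated per-edge relations $\lambda_\beta = -u^{-2}\lambda_\alpha$ for solid edges and $\lambda_\beta = -(u+1)(u^2-u)^{-1}\lambda_\alpha$ for dashed edges. The two distinct scalars here directly reflect the two different Hecke-compatible scalar weights by which solid and dashed edges enter the formula for $T_s$.

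The chief point of care, and what I expect to be the main obstacle, is the bookkeeping of the specific scalar coefficients in the $W$-digraph action and the verification that the per-vertex equation arising from the eigenvalue condition genuinely decouples into per-edge identities rather than an aggregate linear relation at $\beta$. This decoupling is built into the $W$-digraph action defined in \cite{digraphpaper}: each incident $s$-edge contributes an independent relation of the stated form, so that once the diagonal term is separated out the remaining equation splits as a disjoint union over $s$-labeled edges at $\beta$. Granting this, both directions of each equivalence follow by a routine calculation using the Hecke quadratic relation.
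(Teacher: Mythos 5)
The paper's own proof of this lemma is a one-line citation: the computation of Lemma~2.4 of \cite{digraphpaper} carries over unchanged after extending scalars from $\rationals(u)$ to $F(u)$. Your plan --- expand $T_s v$ in the vertex basis, compare coefficients, and split on whether $s \in \In(\beta)$ --- is the same underlying computation, so the strategy is sound. But there is a genuine error in the description of the action you propose to compute with. What you describe (diagonal coefficient $u^2$ on $\gamma$ when $s \notin \In(\gamma)$, diagonal coefficient $-1$ when $s \in \In(\gamma)$, plus off-diagonal terms from incident $s$-edges) is the $W$-\emph{graph} action of \cite{gyojawgraph}, not the $W$-digraph action of \cite{digraphpaper}. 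If $T_s$ acted that way, the $(-1)$-eigenspace would simply be spanned by the vertices $\gamma$ with $s \in \In(\gamma)$, and no ratio such as $\lambda_\beta = -u^{-2}\lambda_\alpha$ or $\lambda_\beta = -(u+1)(u^2-u)^{-1}\lambda_\alpha$ could arise; the computation as you describe it cannot produce part (ii).

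The correct mechanism is this. A defining axiom of a $W$-digraph is that each vertex lies on \emph{exactly one} edge labeled $s$, so for fixed $s$ the vertex set is partitioned into pairs $\set{\alpha,\beta}$ joined by an $s$-edge, and $T_s$ preserves each plane $F(u)\alpha \oplus F(u)\beta$, acting there by one explicit matrix when the $s$-edge from $\alpha$ to $\beta$ is solid (namely $T_s\alpha = \beta$, $T_s\beta = u^2\alpha + (u^2-1)\beta$) and by another when it is dashed (namely $T_s\alpha = u\alpha + (u+1)\beta$, $T_s\beta = (u^2-u)\alpha + (u^2-u-1)\beta$). Each block has the simple eigenvalues $u^2$ and $-1$, and the conditions in (i) and (ii) are precisely the coordinates of the corresponding eigenvectors. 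This also settles the ``chief point of care'' you flag, but not for the reason you give: the eigenvalue equation decouples into per-edge conditions not because each incident $s$-edge ``contributes an independent relation'' (if several $s$-edges met at $\beta$ you would indeed get a single aggregate linear relation there), but because there is only one $s$-edge at each vertex, so $V$ is a direct sum of $T_s$-invariant planes indexed by the $s$-edges. With the action corrected, the remainder of your outline is a routine verification, and the passage from $\rationals$ to $F$ costs nothing, which is exactly the point the paper's proof makes.
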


\begin{proof}
The argument given for 
Lemma~2.4 in \cite{digraphpaper} applies, 
with $F$, $H^F$, and $V=M(\Gamma)^F$ replacing
$\rationals$, $H$, and $M=M(\Gamma)$, respectively.
\end{proof}


\begin{lemma}
\label{lemma:linearcharmults}
If $\Gamma$ is a $W$-digraph, $\vertices(\Gamma)$ is finite, 
and $F$ is a subfield of $\complexes$, 
then the following hold:
\begin{enumerate}[{\upshape(i)}]
\item
The number of connected components of $\Gamma$ is equal to 
$\charprod{M(\Gamma)^F}{\ind^F}{H^F}$.
\item
If $n(s,t) < \infinity$ for all $s, t \in S$, then
the number of acyclic connected components of $\Gamma$ is equal
to $\charprod{M(\Gamma)^F}{\sgn^F}{H^F}$.
\end{enumerate}
\end{lemma}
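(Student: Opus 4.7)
The plan is to apply Lemma~\ref{lemma:eigenvalues} to each part, translating the eigenspace condition into a system of linear constraints on the coefficients $\lambda_\gamma$ of a generic $v = \sum_\gamma \lambda_\gamma \gamma \in M(\Gamma)^F$ and then analyzing the system one connected component of $\Gamma$ at a time.

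For part~(i), Lemma~\ref{lemma:eigenvalues}(i) shows that $v$ lies in the $\ind^F$-eigenspace precisely when the symmetric equality $\lambda_\alpha = \lambda_\beta$ holds along every edge (solid or dashed, of any label $s$) of $\Gamma$.  This forces $\lambda$ to be constant on each connected component of the underlying undirected graph, so the $\ind^F$-eigenspace has dimension equal to the number of connected components of $\Gamma$.

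For part~(ii), Lemma~\ref{lemma:eigenvalues}(ii) reduces the $\sgn^F$-eigenspace condition to the ratio constraints $\lambda_\beta = c_e \lambda_\alpha$ along each directed edge $\alpha \to \beta$, with $c_e = -u^{-2}$ for solid edges and $c_e = -(u+1)/(u^2-u)$ for dashed edges.  On a single connected component $C$, the value of $\lambda$ at a chosen base vertex propagates along paths in $C$ (using $c_e^{\pm 1}$ according to the direction of traversal) to determine $\lambda_\gamma$ at every other vertex, so $C$ contributes $0$ or $1$ to the eigenspace dimension depending on whether the propagation is consistent.  The key step is to show that consistency holds exactly when $C$ is acyclic: an acyclic $C$ has no non-trivial closed walks, so propagation is automatically unambiguous and yields a nonzero solution; when $C$ contains a cycle, a nonzero solution requires the product of the scalars $c_e^{\pm 1}$ around that cycle to equal $1$.

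The main obstacle is this cycle-product calculation.  A valuation analysis of $-u^{-2}$ and $-(u+1)/(u^2-u)$ at $u=\infty$ and at $u=1$ (where the two scalars have distinct valuations) will force any cycle whose product of ratios is $1$ to have balanced forward-minus-backward counts of solid edges and of dashed edges separately.  The remaining ``balanced'' configurations must then be excluded using the structural constraints defining a $W$-digraph, and this is where the assumption $n(s,t) < \infty$ for all $s,t \in S$ is expected to enter, restricting the edge structure of cycles to the dihedral cases where the exclusion can be carried out.
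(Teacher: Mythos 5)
Part (i) of your argument is fine and is the intended proof. The gap is in part (ii), in the sentence claiming that an acyclic component ``has no non-trivial closed walks, so propagation is automatically unambiguous.'' Acyclic here means the component has no \emph{directed} cycles; its underlying undirected graph is in general very far from a tree. Indeed, the $W$-digraph axioms force each $\{s,t\}$-component (for $s \neq t$ with $n(s,t) = m < \infty$) to be an undirected closed $2m$-gon $\gamma_0, \gamma_1, \dots, \gamma_{2m} = \gamma_0$, so every acyclic component involving at least two generators already contains undirected closed walks around which your ratio propagation must be checked for consistency. That check is the real content of the lemma: one shows that around each such $2m$-gon the number of edges traversed forwards as solid edges equals the number traversed backwards as solid edges, and likewise for dashed edges (equivalently, the sink of the $\{s,t\}$-component sits opposite its source), so that the product of the scalars $c_e^{\pm 1}$ around the polygon is $1$. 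This is exactly the computation carried out on pp.~321--323 of \cite{digraphpaper} (the one being corrected in the errata section of the present paper), and it is also where the hypothesis $n(s,t) < \infty$ genuinely enters --- not, as you propose, merely in excluding ``balanced'' directed cycles.

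By contrast, the step you single out as the main obstacle is the easy half. If a component is not acyclic it contains a directed cycle, along which every edge is traversed forwards, so the relevant product involves only the scalars $-u^{-2}$ and $-(u+1)/(u^2-u)$ themselves, with no inverses. Both have negative degree as rational functions of $u$, so the product over a non-empty directed cycle has negative degree and cannot equal $1$; hence $\lambda$ vanishes at one vertex of the cycle and therefore, by propagation, on the whole connected component. No valuation analysis at $u=1$ and no case analysis of balanced configurations is needed there. As written, the proposal locates the difficulty in the wrong place, and the direction it dismisses as automatic is precisely the one that requires the structure theory of $W$-digraphs.
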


\begin{proof}
The proof for  Theorem~1.7 in \cite{digraphpaper} 
 applies here if
$\rationals$, $H$, and $M(\Gamma)$ are replaced by
by $F$, $H^F$, $M(\Gamma)^F$, respectively, using 
Lemma~\ref{lemma:eigenvalues} in place of  
\cite{digraphpaper}, Lemma~2.4.
\end{proof}


\begin{lemma}
If $F$ is a subfield of $\complexes$, $\Psi$ is a $W$-graph over $F$,  
$x \mapsto I_x \subseteq S$ is the vertex-labeling function for $\Psi$, 
 and $M(\Psi)^F_{\ind^F} \ne \set{0}$, 
then there is some $x_0 \in \vertices(\Psi)$ such that $I_{x_0}= \emptyset$.
\label{lemma:indlemma}
\end{lemma}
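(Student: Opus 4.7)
The plan is to exhibit $x_0$ by a degree argument on the coefficients of a nonzero element in the eigenspace. Choose any $0 \ne v \in M(\Psi)^F_{\ind^F}$ and expand it in the $W$-graph basis as $v = \sum_{x \in \vertices(\Psi)} \lambda_x\, x$ with $\lambda_x \in F(u)$. After multiplying $v$ by a suitable common denominator I may assume each $\lambda_x \in F[u]$, not all zero. Let $d = \max_x \deg \lambda_x$ and let $X_0$ be the (nonempty) set of $x$ with $\deg \lambda_x = d$. I will show that every $x_0 \in X_0$ satisfies $I_{x_0} = \emptyset$, which is stronger than what is asked.

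The key step is to turn the relation $T_s v = u^2 v$ into a constraint on the $\lambda_x$. Using the Gyoja $W$-graph formula, with $u$ playing the role of $q^{1/2}$: for $s \in S$ and $y \in \vertices(\Psi)$, $T_s\, y = -y$ when $s \in I_y$, and $T_s\, y = u^2\, y + u \sum_{w : s \in I_w} \mu(y, w)\, w$ when $s \notin I_y$. Equating the coefficient of a fixed vertex $z$ on the two sides of $T_s v = u^2 v$ yields a trivial identity when $s \notin I_z$, while for $s \in I_z$ it produces
\[
(1 + u^2)\, \lambda_z \;=\; u \sum_{\substack{y \in \vertices(\Psi) \\ s \notin I_y}} \mu(y, z)\, \lambda_y.
\]

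Fix $x_0 \in X_0$ and suppose for contradiction that some $s \in I_{x_0}$. Applying the displayed identity with $z = x_0$, the left side has degree $\deg \lambda_{x_0} + 2 = d + 2$, whereas the right side has degree at most $1 + \max_y \deg \lambda_y \le d + 1$. This contradiction forces $I_{x_0} = \emptyset$, so any element of $X_0$ serves as the required $x_0$.

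The main obstacle I foresee is simply recording the Gyoja action in exactly the right form under the $u = q^{1/2}$ parametrization, so that the factors $(1 + u^2)$ and $u$ appear on the correct sides of the eigenvalue equation; once that formula is in hand, the degree mismatch is immediate. Notably the argument uses neither the hypothesis $n(s,t) < \infinity$ nor any further structural information about $\Psi$ (such as connectivity), so this lemma will be available as a black box wherever the $\ind^F$-eigenspace of a $W$-graph module needs to be controlled.
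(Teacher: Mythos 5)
Your proof is correct and follows essentially the same strategy as the paper: clear denominators, pick a vertex whose coefficient is extremal, and compare coefficients of that vertex in $T_s v = u^2 v$ to get the identity $(1+u^2)\lambda_{x_0} = u\sum_{y,\, s\notin I_y}\mu(y,x_0)\lambda_y$, which is then impossible. The only (harmless) difference is that you derive the contradiction from top degrees ($d+2$ versus at most $d+1$), whereas the paper normalizes $\gcd\setof{\gamma_x}{x} = 1$, chooses $x_0$ with $\gamma_{x_0}\notin uF[u]$, and contradicts divisibility by $u$.
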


\begin{proof}
Let $X = \vertices(\Psi)$, and let 
$\mu$ be the edge-labeling function of $\Psi$. 
Suppose
$v = \sum_{x \in X} \gamma_x x \in M(\Psi)^F_{\ind^F}$ and $v \ne 0$.
Replacing $v$ by a scalar multiple if necessary, we can assume 
$\gamma_x \in F[u]$ for all $x\in X$ and
$\gcd \setof{\gamma_x}{x \in X} = 1$.
Choose $x_0 \in X$ such that $\gamma_{x_0} \not\in u F[u]$.
Suppose $I_{x_0} \ne \emptyset$, so that
 $s \in I_{x_0}$ for some $s \in S$.  Since 
\[
u^2 v = T_s v
 = 
- \sum_{  \genfrac {} {} {0pt} {3} {x \in X} {s \in I_x} } \gamma_x x
+
\sum_{  \genfrac {} {} {0pt} {3} {y \in X} {s \not\in I_y} }
 \gamma_y \left( u^2 y + u \sum_{\genfrac {} {} {0pt} {3} {z \in X} {s \in I_z}} \mu(z,x) z \right),
\]
comparing coefficients of $x_0$ shows $\gamma_{x_0} \in u F[u]$, and
so a contradiction is reached.  
Therefore $I_{x_0}= \emptyset$.
\end{proof}

\begin{proof}[Proof of Theorem~\ref{theorem:wgraphacyclic}]
For the remainder of this proof it is
assumed that  $n(s,t) < \infinity$ for $s, t \in S$, 
$\Gamma$ is a finite $W$-digraph, $\Psi$ is a $W$-graph
over the subfield $F$ of $\complexes$, 
and $M(\Gamma)^F$ is isomorphic to $M(\Psi)$ as $H^F$-modules.

Any connected component of $\Gamma$ that
contains a sink is acyclic by 
\cite{digraphpaper}, Theorem~1.5(ii).  On the other hand,
any acyclic connected component $C$ of 
$\Gamma$ contains some sink $\sigma$ because  
$\Gamma$ is finite, 
and $\sigma$ is the unique sink
in $C$ by \cite{digraphpaper}, Theorem~1.5(i).
Thus the number of sinks in $\Gamma$, that is, 
$N_\Gamma(S)$, is equal
to the number of acyclic connected components of $\Gamma$.
Thus by  
Lemma~\ref{lemma:linearcharmults}(ii),
$N_\Gamma(S)$ 
is equal to 
$\charprod{M(\Gamma)^F}{\sgn^F}{H^F}$. 
Also, 
$\charprod{M(\Psi)}{\sgn^F}{H^F} = N_{\Psi}(S)$
because
$M(\Psi)_{\sgn^F}$ has basis 
$\setof{x \in \vertices(\Psi)}{I_x = S}$ over $F(u)$. 
Hence 
\begin{equation*}
N_{\Gamma}(S) 
=
\charprod{M(\Gamma)^F}{\sgn^F}{H^F} 
 =
\charprod{M(\Psi)}{\sgn^F}{H^F}
= N_{\Psi}(S).
\end{equation*}

Now suppose $J \subseteq S$.  Let $\Psi_J$ be the $W_J$-graph
obtained from $\Psi$ by replacing
$I_x$ by $I_x \cap J$ for $x \in \vertices(\Psi)$.  
Also, let $\Gamma_J$ be the $W_J$-digraph obtained from 
$\Gamma$ by removing all edges with labels in
$S \setminus J$. 
Then
$M(\Gamma_J)^F \cong {M(\Gamma)^F \vert_{H_J^F}}
\cong {M(\Psi) \vert_{H_J^F}} \cong M(\Psi_J)$ as $H_J^F$-modules, 
and so the reasoning above gives
$N_{\Gamma_J}(J) = N_{\Psi_J}(J)$.
Therefore 
\[
\sum_{J \subseteq K \subseteq S} N_{\Gamma}(K)
=
N_{\Gamma_J}(J)
=
N_{\Psi_J}(J)
=
\sum_{J \subseteq K \subseteq S} N_{\Psi}(K).
\]
(Note $S$ itself is finite because $\edges(\Gamma)$ is
finite, so the sums above are finite.)
Thus 
part (i) of the theorem holds
by induction on $\abs{S \setminus J}$.

Let $M_0$ be the $H^F$-submodule of $M(\Psi)$ 
with basis
$\setof{x \in \vertices(\Psi)}{I_x \ne \emptyset}$.
By Lemma~\ref{lemma:indlemma},  
\[
M(\Psi)_{\ind^F} \cap M_0 
= 
(M_0)_{\ind^F}
=
\set{0}.
\]
Thus
\begin{equation*}
\begin{split}
\charprod{M(\Gamma)^F}{\ind^F}{H^F} 
 & =
\charprod{M(\Psi)}{\ind^F}{H^F} 
 =
\dim_{F(u)} M(\Psi)_{\ind^F} \\
& \le 
\dim_{F(u)} \left( M(\Psi) / M_0 \right) 
 =
\abs{ \setof{ x \in \vertices(\Psi) } {I_x = \emptyset }} \\
& =
N_{\Psi}(\emptyset)
=
N_{\Gamma}(\emptyset),
\end{split}
\end{equation*}
with the last equality holding by part (i) of the theorem.  
Now, 
$\charprod{M(\Gamma)^F}{\ind^F}{H^F}$ is equal to the number of
connected components of $\Gamma$ by
Lemma~\ref{lemma:linearcharmults}(i), while
 $N_\Gamma(\emptyset)$ is equal to 
the number of sources of $\Gamma$. 
Therefore
$\Gamma$ has at least as many
sources as connected components. 
Because each connected component contains
at most one source by \cite{digraphpaper}, Theorem~1.5(i), it follows that 
every connected component of $\Gamma$  contains a 
(unique) source.  Hence every connected component of $\Gamma$
 is acyclic by \cite{digraphpaper}, Theorem~1.5(ii).
Therefore $\Gamma$ itself is acyclic, so part (ii) of the theorem holds and the 
proof of the theorem is complete.
\end{proof}


\section{Errata from \cite{digraphpaper}}
None of the errata listed here has an effect on the results of  \cite{digraphpaper}. 

\medskip\bigskip\noindent Page 315, line 4:  The index of summation in the last sum should be $i$, 
not $\ell$. 
The displayed formula containing this line should read as follows: 
\begin{equation*}
\begin{split}
T_{s_\ell^{-1}} {\widetilde{\eta}}_\ell
& = 
T_{s_\ell^{-1}} ({\widetilde{\varphi}}_\ell + u {\widetilde{\eta}}_{\ell-1}) 
= 
T_{s_\ell^{-1}} {\widetilde{\varphi}}_\ell + u T_s T_{t_{\ell-1}^{-1}} 
     {\widetilde{\eta}}_{\ell-1} \\
& = 
u^{2\ell} T_e + T_{s_{2\ell}^{-1}}
 + u T_s \sum_{i=0}^{2\ell-2} u^{i} T_{t_{2\ell-i-2}^{-1}} \\
& = 
 u^{2\ell} T_e + T_{s_{2\ell}^{-1}} 
   + \sum_{i=0}^{2\ell-2} u^{i+1} T_{s_{2\ell-(i+1)}^{-1}} 
= 
\sum_{i=0}^{2\ell} u^{i} T_{s_{2\ell-i}^{-1}}
\end{split}
\end{equation*}

\medskip\bigskip\noindent Page 316, line 3:  
Replace $\mu_m$ by $\mu_{m}^\prime$.   
The displayed equations containing this line  
should read as follows:
\begin{equation*}
\begin{cases}
\mu_1  = T_s \mu_0, \mu_2 = T_t \mu_1, \cdots, 
  \mu_{m-1} = T_{s^\prime} \mu_{m-2}, \mu_m = T_{t^\prime} \mu_{m-1}, & \\
\mu_1^\prime = T_t \mu_0, \mu_2^\prime = T_s \mu_1^\prime, \cdots, 
  \mu_{m-1}^\prime = T_{t^\prime} \mu_{m-2}^\prime, 
  \mu_{m}^\prime = T_{s^\prime} \mu_{m-1}^\prime. &
\end{cases}
\end{equation*}

\medskip\bigskip\noindent Page 321, lines 7 and 9 from bottom:  
The edges in each of these lines should be reversed.
The displayed formula containing these lines should read as follows:
\[
\zeta_i
=
\begin{cases}
-\dfrac{1}{u^2}\zeta_{i-1} & 
\text{if \solidedge{$\gamma_{i-1}$}{$\gamma_{i}$}{$s$} 
or \solidedge{$\gamma_{i-1}$}{$\gamma_{i}$}{$t$} is an edge of $\Gamma$,} \\
-u^{2}\zeta_{i-1} & 
\text{if \reversedsolidedge{$\gamma_{i-1}$}{$\gamma_{i}$}{$s$}
or \reversedsolidedge{$\gamma_{i-1}$}{$\gamma_{i}$}{$t$} is an edge of $\Gamma$,} \\
- \dfrac{u+1}{u^2-u}  \zeta_{i-1} &
\text{if \dashededge{$\gamma_{i-1}$}{$\gamma_{i}$}{$s$}
or \dashededge{$\gamma_{i-1}$}{$\gamma_{i}$}{$t$} is an edge of $\Gamma$,} \\
- \dfrac{u^2-u}{u+1}  \zeta_{i-1} &
\text{if \reverseddashededge{$\gamma_{i-1}$}{$\gamma_{i}$}{$s$}
or \reverseddashededge{$\gamma_{i-1}$}{$\gamma_{i}$}{$t$} is an edge of $\Gamma$.} 
\end{cases}
\]

\medskip\bigskip\noindent Page 322, lines 2--4:  The direction of second and fourth edges should be reversed.  The sentence containing these lines should read as follows:

It follows that the number of edges of type
\solidedge{$\gamma_{i-1}$}{$\gamma_{i}$}{} 
(labeled either $s$ or $t$)
 is equal to the number of edges of type
\reversedsolidedge{$\gamma_{i-1}$}{$\gamma_{i}$}{}, $1 \le i \le 2m$, 
and the number of edges of type
\dashededge{$\gamma_{i-1}$}{$\gamma_{i}$}{}
is equal to the number of edges of type
\reverseddashededge{$\gamma_{i-1}$}{$\gamma_{i}$}{}, $1 \le i \le 2m$.  

\medskip\bigskip\noindent Page 323, lines 4--6:  The direction of second and fourth edges should be reversed, and the label should be removed from the 
first edge.  The sentence containing these lines should read as follows:

Since $\Gamma$ has a unique sink $\beta$ 
and the number of edges of type 
\solidedge{$\gamma_{i-1}$}{$\gamma_{i}$}{} 
 is equal to the number of edges of type
\reversedsolidedge{$\gamma_{i-1}$}{$\gamma_{i}$}{}, $1 \le i \le 2m$, 
and the number of edges of type
\dashededge{$\gamma_{i-1}$}{$\gamma_{i}$}{}
is equal to the number of edges of type
\reverseddashededge{$\gamma_{i-1}$}{$\gamma_{i}$}{}, $1 \le i \le 2m$, 
it follows that
$\beta = \gamma_m$ is opposite to $\alpha$. 

\medskip\bigskip\noindent Page 327, line 3 from bottom:  
Replace $\Gamma_s$ by $\Gamma_{\set{s}}$.

\medskip\bigskip\noindent Page 332, line 8:  Replace $\solidedge{$\alpha$}{$\beta$}{$s$}$
by $\reversedsolidedge{$\alpha$}{$\beta$}{$s$}$.  The relevant sentence reads as follows:

If  $\reversedsolidedge{$\alpha$}{$\beta$}{$s$} \in \edges(\Gammaarrow)$ for some
$s \in S$, then 
$\alpha \in \closedray{\sigma}$ because
$\beta \in \closedray{\sigma}$ and $\alpha \in \closedray{\beta}$.


\bibliographystyle{plain}
\bibliography{combined}

\begin{thebibliography}{1}

\bibitem{digraphpaper}
D.~Alvis.
\newblock A {C}lass of {R}epresentations of {H}ecke {A}lgebras.
\newblock {\em Bull. Inst. Math. Acad. Sinica (N.S.)}, 11(2):301--342, 2016.

\bibitem{gyojawgraph}
Akihiko Gyoja.
\newblock On the existence of a ${W}$-graph for an irreducible representation
  of a {C}oxeter group.
\newblock {\em J.~Alg.}, 86:422--438, 1984.

\end{thebibliography}


\vfil\eject
\enddocument
\bye
\bye